\documentclass{siamart250211}
\usepackage{amsmath,amssymb,amscd,amsfonts}
\usepackage{color}
\usepackage{graphicx}
\usepackage{mathtools}     
\usepackage{xfrac}
\usepackage{algorithm,algpseudocode}
\usepackage{tikz}
\usepackage{pgfplots}
\pgfplotsset{compat=1.18} 

\newcommand{\pyr}{\operatorname{pyr}}
\newcommand{\aff}{\operatorname{aff}}
\newcommand{\conv}{\operatorname{conv}}

\newcommand{\xb}{\mathbf{x}}
\newcommand{\yb}{\mathbf{y}}
\newcommand{\zb}{\mathbf{z}}
\newcommand{\vb}{\mathbf{v}}
\newcommand{\ub}{\mathbf{u}}
\newcommand{\wb}{\mathbf{w}}
\newcommand{\ab}{\mathbf{a}}

\newcommand{\lambdab}{{\boldsymbol{\lambda}}}
\newcommand{\xib}{{\boldsymbol{\xi}}}

\newcommand{\R}{\mathbb{R}}

\newcommand{\V}{\mathcal{V}}

\newcommand{\half}{\frac{1}{2}}

\newcommand{\abs}[1]{\left| #1 \right|}

\newcommand{\leaves}[1]{\mathcal{L}(#1)}

\newcommand{\faces}[2]{\mathcal{F}(#1,#2)}

\title{Space-Time Galerkin Boundary Element Method for the Wave Equation}
\author{Johannes Tausch\thanks{
    Southern Methodist University,Dallas,TX, USA
    (\email{tausch@smu.edu})} }

\headers{Galerkin Boundary Element Method for the Wave Equation}{J.Tausch}

\begin{document}

\maketitle

\begin{abstract}
  The space-time Galerkin discretization of retarded layer potentials leads to a linear system where the coefficients are expressed in terms of integrals over the ansatz and test elements. They require carefully designed quadrature schemes because the kernel is singular in the origin and is discontinuous across the hyperbolicity cone.  This paper introduces a new integration approach that leads to a scheme that converges exponentially with the number of quadrature points.  The key here is to consider the integration domain as a convex polytope and to devise a decomposition into the convex hulls of simpler polytopes which are parameterized such that the singularity and discontinuity occurs in a single variable.  The method is implemented for piecewise constant elements and tested on a scattering problem with known analytic solution.
\end{abstract}

\begin{keywords}
  Retarded potentials, Space-Time Galerkin Discretization, Convex polytopes, Numerical integration
\end{keywords}

\begin{AMS}
  65M38  
  52B11  
  65D30  
  35L05  
\end{AMS}

\section{Introduction}
Boundary integral equation methods are widely and successfully used for
the numerical solution of scattering problems.  In the case of the
wave equation the integration in Green's representation formula is over
the intersection of the hyperbolicity cone with the lateral boundary
of the space-time cylinder. One can roughly distinguish two
discretization approaches for these ``retarded potentials''.
The first is to construct a discretization method in frequency domain
and then transform back to time domain. This is the convolutional
quadrature method~\cite{lubich94,schanz-antes97,kress-sauter08}.  The
second is to discretize the space and time variables directly, either by
collocation\cite{davies-duncan04}, convolution spline method
\cite{davies-duncan14}, or space-time Galerkin methods.
For a more
comprehensive overview of this subject the reader is referred to the
survey~\cite{costabel04} and the monograph~\cite{sayas16}.

The study of retarded potentials in weak form 
was initiated in~\cite{bamberger-duong86}. An alternative
approach are the energetic variational
formulations~\cite{aimi-etal09,aimi-etal10}.  Further examples of papers that
develop space-time Galerkin methods and describe applications
to real-life problems are
\cite{banz-etal16,gimperlein-etal17,schneider-etal26}. We also refer
to the review \cite{joly-rodríguez17}.

The main focus of this paper is on the numerical evaluation of the
integrals in the matrix coefficients of the discretized linear
system. As will be demonstrated in Section~\ref{sec:galerkin} they
involve calculating integrals of the form
\begin{equation}\label{def:integral}
  I = \int_{P_x} \int_{P_y}
  \frac{k(\abs{\xb-\yb})}{\abs{\xb-\yb}} \Phi(\xb,\yb) \, d\yb d\xb
\end{equation}
where $P_x$, $P_y$ are the domains of the finite elements and
$\Phi(\xb,\yb)$ is a smooth function that represents the contributions
of the spatial shape functions. The kernel $k$ depends on the
choice of basis
functions in time. If standard piecewise polynomials are used, $k(r)$
is a polynomial in the interval $[0,\rho^*]$ and vanishes for $r>\rho^*$.

Evaluating \eqref{def:integral} with numerical quadrature is difficult
for two reasons: First, the integrand is singular when $P_x$ and $P_y$
have common vertices. The second difficulty is the limited regularity
of the integrand when the distance of $\xb$ and $\yb$ is near the
truncation parameter.

Integrals with $\rho^*=\infty$ arise in Galerkin boundary element
methods for elliptic equations. For the cases that $P_x$ and $P_y$
have common edges singularity removing
transformations have been developed that convert \eqref{def:integral} into a
number of smooth integrals over the four dimensional unit cube. The
transformed integrals can be computed efficiently with tensor product
Gauss Legendre rules, see~\cite{sauter-schwab11}.

To address the regularity issue for hyperbolic equations the papers
\cite{khoromskij-etal11, sauter-veit13} propose to use globally smooth
functions for the time discretization which in turn lead to a globally
smooth kernel. To handle the discontinuities with more standard
elements one usually treats \eqref{def:integral} as an iterated
integral.  To calculate the inner integral for a fixed $\xb$ the
points of $P_y$ are trimmed off that are farther from $\xb$ than
$\rho^*$. This requires some geometry calculations which can get
tedious as there are several cases of how $P_y$ intersects. The
greater problem is that the inner integral is in general not a
smooth function of $\xb$, which then makes its integration tricky.
Approaches along these lines have been discussed in a number of
previous investigations, see, e.g., \cite{aimi-etal13, ostermann10,
  polz-schanz19, polz-schanz21}.

In this paper we consider the domain in \eqref{def:integral} as the
Cartesian product of $P_x$ and $P_y$, which is a convex polytope in
$\R^6$. The idea is to employ the pyramidal decomposition algorithm
which was initially introduced in~\cite{tausch26} for kernel integrals
with $\rho^*=\infty$.  This algorithm will be modified such that the
decomposition of $P_x\times P_Y$ results in integrals that either
require no truncation or can be truncated easily in a single variable.

The outline of this paper is as follows. Section~\ref{sec:galerkin}
reviews how the wave equation can be solved using Galerkin
discretizations of retarded layer potentials. The following
section introduces the concepts from polytope
theory that will be used to describe the pyramidal decomposition
algorithm discussed in Section~\ref{sec:decomp}. Section~\ref{sec:analysis}
demonstrates that each piece generated in the decomposition can be
parameterized in a way such that the singularity and discontinuity
appears in a single variable. Finally,
Section~\ref{sec:implementation} gives more details about the
implementation and presents results obtained with a problem with
spherical geometry.

\section{Galerkin Discretization of Retarded Potentials}\label{sec:galerkin}
To keep the exposition simple we focus on the Dirichlet problem of the
wave equation posed in the exterior of a finite scattered $\Omega
\subset \R^3$. If $\Omega^c = \R^3 \setminus \bar \Omega$, the problem
is 
\begin{equation}\label{def:waveeqn}
  \begin{aligned}
  \partial_t^2 u - \Delta u &= 0 \quad\text{in } \Omega^c \times (0,T),\\ 
    u &= f \quad\text{on } \Gamma \times (0,T),\\
    \partial_t u = u &= 0 \quad \text{in } \Omega^c \times \{0\}.
  \end{aligned}
\end{equation}
The radiation condition is that $u$ has finite support at each time.
The solution is represented by the retarded single layer potential
\begin{equation}\label{def:single}
u(\xb,t) = \tilde \V q(\xb,t) := \int_{\Gamma} \frac{1}{4\pi\abs{\xb-\yb}}
q(\yb,t-\abs{\xb-\yb}) \, ds(\yb), \quad  \xb \in \R^3 \setminus \Gamma\,,
\end{equation}
where $\Gamma=\partial \Omega$ and $q$ is an unknown density on the
space time cylinder $\Gamma \times [0,T]$.  The jump relations of the
single layer potential lead to the boundary integral equation
\begin{equation}\label{def:intgrEqn}
  \V q(\xb,t) = f(\xb,t)  \qquad  (\xb,t) \in \Gamma.
\end{equation}
for $q$.
Here $\V$ is the operator obtained by letting $\xb$ in \eqref{def:single} be
on the boundary.

For the discretization we use a
tensor product of piecewise polynomial functions in space and
time. Specifically, for space, the surface $\Gamma \subset \R^3$ is subdivided
into conforming patches
\begin{equation}\label{def:triangul}
\Gamma = \bigcup_{j=1}^{N_f} P_j.
\end{equation}
We assume that the $P_j$'s are flat and ignore the additional
approximation error that occurs 
if \eqref{def:triangul} represents only an approximation of the actual
geometry of the problem. The space of piecewise polynomial
functions is spanned by functions
$\phi_i$, $i\in \{1,\dots,N_s\}$, where $N_s$ is the dimension of the
spatial ansatz space.

To simplify the exposition, we consider only piecewise constant or
linear elements on a uniform subdivision of the
interval $[0,T]$. If $N_t$ is the number of subintervals with length
$h=T/N_t$, then the
temporal ansatz functions are
\begin{equation*}
\varphi_k(t) = \varphi\left( \frac{t}{h} - k\right), \quad k\in\{0,\dots,N_t\},
\end{equation*}
where $\varphi$ is the usual box or hat function.

Thus the solution $q$ of \eqref{def:intgrEqn} is approximated by
\begin{equation}\label{def:coeff:q}
  q_h(\yb,\tau) = \sum_{\ell=0}^{N_t} \sum_{j=1}^{N_s}
  \varphi_\ell(\tau) \phi_j(\yb) q_{\ell j}.
\end{equation}
The Galerkin projection is obtained by multiplying with the time
derivative of the test functions. This leads to
\begin{equation}\label{varform}
  \int_0^\infty \int_{\Gamma} \varphi'_k(t) \phi_i(\xb) \V q_h(\xb,t) \, ds(\xb) dt
  = \int_0^\infty \int_{\Gamma} \varphi'_k(t) \phi_i(\xb) f(\xb,t) \, ds(\xb) dt
\end{equation}
for $k\in \{0,\dots,N_t\}$ and $i\in \{1,\dots,N_s\}$. The integrals on the left
hand side of this equation involve the time integrals
\begin{equation}\label{def:gd}
  \int_0^\infty \varphi'_{k}(t) \varphi_{\ell} (t-r) \, dt
  = \int_{-s}^\infty \varphi'(u-d) \varphi(u-r/h)\, du
  =: g_{k-\ell}(r)
\end{equation}
where $s=0$ when $\varphi$ is the box function and $s=1$ when
$\varphi$ is the hat function. Thus the variational form \eqref{varform}
reduces to the linear system 
\begin{equation}\label{def:linsys}
  \sum_{\ell=0}^k A_{k-\ell}\, q_\ell = f_k , \quad k=0,1,\dots N_t,
\end{equation}
where $q_\ell\in \R^{N_s}$ are the coefficients for the $\ell$-th time
step in \eqref{def:coeff:q}, $f_k$ is the vector with coefficients
\begin{equation*}
  f_{k,i} = \int_0^T \int_{\Gamma} \varphi'_k(t) \phi_i(\xb) f(\xb,t) \, ds(\xb) dt.
\end{equation*}
The matrix $A_{d} \in \R^{N_s\times N_s}$ is defined by
\begin{equation}\label{def:Aij}
  A_{d,ij} = \int_\Gamma \int_\Gamma \frac{1}{4\pi\abs{\xb-\yb}} 
  g_d(\abs{\xb-\yb}) \phi_i(\xb)\phi_j(\yb)\, ds(\yb) ds(\xb),
\end{equation}
where $-s \leq d \leq N_t$. Thus the system is block lower
triangular if piecewise constant ansatz functions are used whereas for
piecewise linears the system has an additional upper diagonal. 
It is straight forward to determine the explicit form of the time
integrated kernels. One finds that
\begin{equation*}
  g_d(r) =
  \begin{cases}
    0 & \text{ for } r < 0, \\ 
  \tilde g(r/h - d)  & \text{ for } r \geq 0,
\end{cases}
\end{equation*}
where in the case of piecewise constants
\begin{equation*}
\tilde g(z) =
\begin{cases}
   \tilde g_{-1} (z) := 1  & \text{ for } z\in [-1,0], \\
  \tilde g_{0} (z) :=  -1  & \text{ for } z\in [0,1],  \\  0 & \text{ otherwise.}
\end{cases}
\end{equation*}
and in the case of piecewise linears
\begin{equation*}
\tilde g(z) =
\begin{cases}
   \tilde g_{-2} (z) := \half \left(z+2\right)^2  & \text{ for } z\in [-2,-1],  \\
  \tilde g_{-1} (z) := -\half z \left(3z + 4\right) & \text{ for } z\in [-1,0],  \\
   \tilde g_{0} (z) := \half z \left(3z - 4\right) & \text{ for } z\in [0,1],  \\
  \tilde g_{1} (z) := -\half \left(z-2\right)^2  & \text{ for } z\in [1,2],  \\
  0 & \text{ otherwise.}
\end{cases}
\end{equation*}
The function $\tilde g(z)$ can also be written as
\begin{equation*}
\tilde g(z) = \sum_{j=-s-1}^s \tilde g_j(z) \chi_{[j,j+1]}(z)
\end{equation*}
where $\chi_{[j,j+1]}$ is the characteristic function of the interval
$[j,j+1]$. Using the identity $\chi_{[a,b]}(z) = H(b-z) - H(a-z)$ where $H$
is the Heavyside function, it follows that
\begin{equation*}
\tilde g(z) = \sum_{j=-s-1}^s \tilde g_j(z) \Big( H(j+1-z) - H(j-z) \Big)
= \sum_{j=-s-1}^{s+1} \tilde k_j(z) H(j-z)
\end{equation*}
where  $\tilde k_j = g_j - g_{j-1}$, $j\in \{-s,\dots,s\}$ and
$\tilde k_s = g_s$.
Returning to the function defined in \eqref{def:gd} shows that
\begin{equation*}
  g_d(z) = \sum_{j=-s-1}^{s+1} \tilde k_j\left(\frac{r}{h}-d\right)
                             H\left(j-d - \frac{r}{h}\right)
:= \sum_{j=-s-1}^{s+1} k_{d,j}(r) .
\end{equation*}
Here $k_{d,j}(r)$ is a polynomial when $r\in [0, \rho^*]$ and
vanishes for $r>\rho^*$. The truncation parameter is  $\rho^*=(d-j)h$.
Substituting this representation of $g_d$ into 
\eqref{def:Aij} shows that the matrix coefficients involve computing
a number of integrals of the type \eqref{def:integral}.

\section{Convex Polytopes}\label{sec:polytope}
The patches $P_k$ are a special case of two dimensional convex
polytopes in $\R^3$. This section reviews the basic concepts from
polytope theory that will be used in the ensuing discussion. For a
more complete introduction to this topic the reader is referred
to~\cite{grunbaum03,ziegler95}.

In general, a convex polytope in $\R^d$ is the convex hull of its
vertices
\begin{equation}\label{def:polytope}
  P = [\vb_0, \dots, \vb_n] :=
   \left\{ \sum_{i=0}^n \lambda_i \vb_i \colon \lambda_i \geq
   0, \sum_{i=0}^n \lambda_i = 1 \right\}.
\end{equation}
Any face of $P$ is the convex hull of a subset of the vertices. Faces are distinguished
based on their dimension. The 0-faces are the vertices, the 1-faces are the edges and
the faces with dimension one less than the polytope are facets.
The smallest affine plane that contains $P$ is denoted by $\aff(P)$. If
$\vb \in \aff(P)$ then the set
\begin{equation}
  H_P = \aff(P) - \vb
\end{equation}
is a linear space, which is independent of the choice of $\vb$. When
dealing with polytopes it is important to distinguish between the
space dimension $d$ and $\dim(P)$, which the dimension of $H_P$.

\subsection{Cartesian Products}
Since we will deal with Cartesian products of two 
polytopes frequently,  we recall some basic facts.
If $\{\vb_k\}_k$, $\{\wb_\ell\}_\ell$ are the vertices of two polytopes
$P_x$ and $P_y$ in $\R^d$, then $P = P_x\times P_y$ is a polytope in
$\R^{2d}$ whose vertices are
$\{(\vb_k,\wb_\ell)\}_{k,\ell}$. The dimension of $P$ is the product of
the dimensions of $P_x$ and $P_y$. 
The $j$-dimensional faces of
$P_x\times P_y$ are Cartesian products of the form $F_x\times F_y$,
where $F_x$ are the $j_x$-faces of $P_x$ , $F_y$ are the $j_y$-faces
of $P_y$ where $j_x + j_y = j$. This implies that the facets of
$P_x\times P_y$ are either products of $P_x$ with the facets of $P_y$
or products of the facets of $P_x$ with $P_y$ .

For a closed subset $F$ of $P_x\times P_y$ the minimal and maximal
distance are 
defined as 
\begin{equation*}
  r_{min}(F) = \min_{(\xb,\yb)\in F} \abs{\xb-\yb} \quad\mbox{and}\quad
  r_{max}(F) = \max_{(\xb,\yb)\in F} \abs{\xb-\yb}.
\end{equation*}

\subsection{Pyramids and Convex Hulls}
An important special case of convex polytopes are pyramids. These are
polytopes where all but one vertices are in an affine
plane. The vertex that is not in the plane is the apex of the
pyramid. The convex hull of the other vertices is a face of the
pyramid, called the base. The notation of a pyramid is
\begin{equation*}
  \pyr(\ab_0, B_0) := \left\{ (1-\lambda) \ab_0 + \lambda \xb_B :\;
    \xb_B\in B_0,\; \lambda \in [0,1] \right\}.
\end{equation*}
If the base is pyramid, i.e., $B_0=\pyr(\ab_1, B_1)$, then 
the notation $\pyr(\ab_0,\ab_1,B_1)$ is used to indicate such an iterated
pyramid. Likewise, $\pyr(\ab_0,\dots \ab_\ell,B_\ell)$ is an $\ell$-times
iterated pyramid.  If $B_0$ is the simplex with vertices
$\ab_0,\dots \ab_n$ then this process can be continued until the last
base is the vertex $\ab_n$.  Thus
$B_0 = \pyr(\ab_0,\dots,\ab_{n-1},[\ab_n])$.

We now turn to the convex hull of two polytopes 
$A$ and $B$ which is defined by
\begin{equation}\label{def:convAB}
  \conv(A,B) = \Big\{ (1-\lambda) \xb_A + \lambda \xb_B :
    \lambda \in [0,1],\; \xb_A \in A,\; \xb_B \in B \Big\} 
\end{equation}
While this definition applies to any two polytopes, we are only interested in situations
where $\xb_A$, $\xb_B$  and $\lambda$ are uniquely refined. This is
true when the following conditins are met
\begin{equation}\label{asu:convAB}
  \aff(A) \cap \aff(B) = \emptyset\quad\mbox{and}\quad
  H_A \cap H_B = \{ \mathbf{0} \}.
\end{equation}

The most important example of a convex hull that satisfies \eqref{asu:convAB}
is the iterated pyramid. If $A=[\ab_0,\dots\ab_k]$ is the simplex
spanned by the vertices then
\begin{equation*}
  \pyr(\ab_0,\dots, \ab_k, B) =
  \conv(A,B).
\end{equation*}
If in \eqref{def:convAB} the set $A$ is the convex hull of two sets
$S$ and $R$, then we obtain that $\conv(A,B) = \conv(S,R,B)$
where
\begin{equation*}
  \conv(S,R,B) = 
  \Big\{ (1-\lambda_1-\lambda_2) \zb_S + \lambda_1
  \zb_R + \lambda_2 \zb_B :\\
   \lambdab \in \Lambda,\, \zb_S \in S,\, \zb_R \in R,\, \zb_B \in B \Big\} 
\end{equation*}
is a convex hull of three sets. Here
$\Lambda = \{ \lambdab: 0\leq
\lambda_1,\lambda_2,\,\lambda_1+\lambda_2\leq 1\}$ is the standard
triangle.

\subsection{Integration over Polytopes}
The polytopes considered
in this article have a lower dimension than the space in which
they are located. Therefore integrals over polytopes are always understood to be in
the measure of their affine plane. Thus
if $F \subset \R^d$ is a $j$-dimensional polytope and the columns of
the matrix $Q \in \R^{d\times j}$ form an
orthogonal basis of $H_F$, then the integral of $f$ over $F$ is
\begin{equation*}
  \int_F f(\zb)\, d\zb = \int_{F_Q} f(\vb + Q\xib)\,d\xib,
\end{equation*}
where $\vb \in \aff(F)$ and the parameter space $F_Q$ is a polytope in $\R^j$ with nonempty
interior. If instead of an orthogonal basis we use a general basis of $H_F$ then it follows
\begin{equation*}
  \int_F f(\zb)\, d\zb = \det(R) \int_{F_T} f(\vb + T\xib)\,d\xib,
\end{equation*}
Here $T = QR$ is the qr-factorization, $R \in \R^{j\times j}$ is upper
triangular and $\det(R)$ is the
Jacobian of the transformation. If $\dim(F)=0$, i.e., the face is a
vertex $F=[\vb]$, then we will interpret the integral over $F$ in the point measure
\begin{equation*}
  \int_{[\vb]} f(\zb)\, d\zb = f(\vb)\,.
\end{equation*}
If $f$ is a smooth function, then standard quadrature methods can
be easily modified to compute the integral. For instance if $F_T$ is a
standard domain, such as a cube or a simplex, one could use tensor
product Gauss
quadrature or Gauss-like quadrature rules for simplices.  

Using the transformation $\zb = (1-\lambda)\zb_A + \lambda \zb_B$ an
integral over $\conv(A,B)$ can be reduced to an iterated integral
as follows
\begin{equation}\label{intgr:convAB}
  \int\limits_{\conv(A,B)} \!\!\!\! f(\zb)\, d\zb =
  \delta_{AB} \int\limits_{A}\int\limits_{B} \int\limits_0^1  
  g\big((1-\lambda) \zb_A + \lambda \zb_B\big)
  (1-\lambda)^{d_A} \lambda^{d_B} \,
    d\lambda \,d\zb_B d\zb_A.
\end{equation}
Here $d_A$ and $d_B$ are the dimensions of $A$ and $B$ and
$\delta_{AB}(1-\lambda)^{d_A} \lambda^{d_B}$ is the Jacobian of the
transformation. The
constant $\delta_{AB}$ expresses the geometric positioning of the
polytopes $A$ and $B$. To compute it, let $T_A$ and $T_B$ be bases of
$H_A$ and $H_B$. Further, let $\vb\in\aff(A)$ and $\wb\in\aff(B)$, then
$T = [T_A, T_b, \vb-\wb]$ is a basis of $H_{\conv(A,B)}$. Let
$R_A$, $R_B$ and $R$ be the triangular factors in the
qr-factorizations of $T$, $T_A$ and $T_B$, then 
\begin{equation*}
 \delta_{AB} = \frac{ \det(R)}{\det(R_A)\det(R_B)}.
\end{equation*}
Using elementary linear algebra one can verify that $\delta_{AB}$ is
independent of the choice of the bases and independent of the choice
of vectors $\vb$ and $\wb$.

The form of \eqref{intgr:convAB} is particularly useful if the
integrand has a singularity of the type
\begin{equation*}
  f( (1-\lambda)\zb_A + \lambda \zb_B ) = \lambda^{-\alpha}
  g( (1-\lambda)\zb_A + \lambda \zb_B )
\end{equation*}
where $g$ is smooth. In that case, the $\lambda$-integral simplifies
to
\begin{equation*}
  I(\zb_A, \zb_B) = \int\limits_0^1
  g( (1-\lambda)\zb_A + \lambda \zb_B ) (1-\lambda)^{d_A}
  \lambda^{d_B-\alpha} \, d\lambda
\end{equation*}
Provided that $\alpha < d_B+1$ this integral can be effectively
computed with a Gauss-Jacobi rule for the weight function
$(1-\lambda)^{d_A}\lambda^{d_B-\alpha}$. In addition, $I(\zb_A,
\zb_B)$ is a smooth, hence the integrals over $A$ and $B$ can
be effectively approximated by regular quadrature rules.

In the spirit of \eqref{intgr:convAB} one can see that an
integral over the triple convex hull becomes the iterated integral
\begin{multline}\label{intgr:convSRB}
  \int\limits_{\conv(S,R,B)} \!\!\!\! f(\zb)\, d\zb = \delta_{SRB}
  \times \\
  \int\limits_{S}\int\limits_{R} \int\limits_{B}\int\limits_{\Lambda}
  f\big((1-\lambda_1-\lambda_2) \zb_S + \lambda_1 \zb_R \lambda_2 + \zb_B\big)
  (1-\lambda_1-\lambda_2)^{d_S} \lambda_1^{d_R}\lambda_2^{d_B} \,
    d\lambdab d\zb_B d\zb_R \zb_S.
\end{multline}
where $\delta_{SRB}$ is defined in terms of 
qr-factorizations. Specifically, let $T_S$, $T_R$ and $T_B$ be
bases of the linear spaces of $S$, $R$ and $B$, $\ub\in \aff(S)$,
$\vb\in \aff(R)$ and $\wb\in \aff(B)$, 
and set $T=[T_S, T_R, T_B, \ub-\vb, \ub-\wb]$ , then
\begin{equation*}
 \delta_{SRB} = \frac{ \det(R)}{\det(R_S)\det(R_R)\det(R_B)},
\end{equation*}
where $R$, $R_S$ $R_R$ are $R_B$ are the upper triangular matrices in
the qr-factorization of $T$, $T_S$, $T_R$ and $T_B$. This form enables
the treatment of certain singularities of the integrand which will be
used in Section~\ref{sec:analysis}.

\section{Pyramidal decomposition}\label{sec:decomp}
This section discusses how a general polytope $P$ can be decomposed into iterated pyramids.
To that end, select any point $\ab\in P$ as the apex and denote by $\faces{\ab}{P}$ the
set of facets of $P$ that do not contain the point $\ab$.  Then $P$ can
be decomposed into pyramids with the apex $\ab$ and bases in $\faces{\ab}{P}$.
That is,
\begin{equation}\label{decomp:P}
  P = \bigcup\limits_{F \in \faces{\ab}{P}} \!\! \pyr(\ab, F),
\end{equation}
where the union has disjoint interiors. 
This simple fact is illustrated in Figure~\ref{fig:cubesDecomp}.
The proof of \eqref{decomp:P}
for the case that the apex is a vertex is in \cite{tausch26}. Since
this proof easily extends to the case that $\ab$ is an arbitrary point
of $P$ it is omitted here.

A few comments on the set $\faces{\ab}{P}$ are in order. If the apex
is in the relative interior of $P$ then $\faces{\ab}{P}$ is the set of
all facets of $P$. If $\ab$ is on a lower dimensional face of $P$ then
all facets are excluded that contain the apex. Hence the cardinality
of $\faces{\ab}{P}$ is minimal for a certain vertex of $P$, and
selecting vertices for the apex generally lead to smaller
decompositions. 
\begin{figure}
\begin{center}
  %
%
\begin{tikzpicture}[xscale=0.6, yscale=0.6] 
  \coordinate (A) at (0,0,0);
  \coordinate (B) at (2,0,0);
  \coordinate (C) at (0,2,0);
  \coordinate (D) at (2,2,0);
  \coordinate (E) at (0,0,2);
  \coordinate (F) at (2,0,2);
  \coordinate (G) at (0,2,2);
  \coordinate (H) at (2,2,2);
\draw[thick] (A) -- (E) -- (F) -- (B) -- cycle;
\draw[thick] (A) -- (C) -- (G) -- (E) -- cycle;
\draw[thick] (A) -- (B) -- (D) -- (C) -- cycle;
\draw[thick] (E) -- (F) -- (H) -- (G) -- cycle;
\draw[thick] (F) -- (B) -- (D) -- (H) -- cycle;
\draw[thick] (G) -- (H) -- (D) -- (C) -- cycle;
\draw[fill=blue] (G) circle (0.10cm);
\node[left] at (G) {$a$};
\node[] at (3.5,1,1) {$=$};
%
%
  \coordinate (A) at (5,0,0);
  \coordinate (B) at (7,0,0);
  \coordinate (C) at (5,2,0);
  \coordinate (D) at (7,2,0);
  \coordinate (E) at (5,0,2);
  \coordinate (F) at (7,0,2);
  \coordinate (G) at (5,2,2);
  \coordinate (H) at (7,2,2);
\draw[fill=blue!20] (A) -- (E) -- (F) -- (B) -- cycle;
\draw[thick] (G) -- (A);
\draw[thick] (G) -- (E);
\draw[thick] (G) -- (F);
\draw[thick] (G) -- (B);
\draw[dashed] (E) -- (A) -- (B);
\draw[dashed] (A) -- (C);
\draw[dashed] (E) -- (F) -- (B) -- (D) -- (C) -- (G) -- cycle;
\draw[dashed] (F) -- (H) -- (G);
\draw[dashed] (D) -- (H);
\node[] at (8,1,1) {$\cup$};
%
%
  \coordinate (A) at (9,0,0);
  \coordinate (B) at (11,0,0);
  \coordinate (C) at (9,2,0);
  \coordinate (D) at (11,2,0);
  \coordinate (E) at (9,0,2);
  \coordinate (F) at (11,0,2);
  \coordinate (G) at (9,2,2);
  \coordinate (H) at (11,2,2);
\draw[fill=blue!20] (F) -- (B) -- (D) -- (H) -- cycle;
\draw[thick] (G) -- (F);
\draw[thick] (G) -- (B);
\draw[thick] (G) -- (D);
\draw[thick] (G) -- (H);
\draw[dashed] (E) -- (A) -- (B);
\draw[dashed] (A) -- (C);
\draw[dashed] (E) -- (F) -- (B) -- (D) -- (C) -- (G) -- cycle;
\draw[dashed] (F) -- (H) -- (G);
\draw[dashed] (D) -- (H);
\node[] at (12,1,1) {$\cup$};
%
%
  \coordinate (A) at (13,0,0);
  \coordinate (B) at (15,0,0);
  \coordinate (C) at (13,2,0);
  \coordinate (D) at (15,2,0);
  \coordinate (E) at (13,0,2);
  \coordinate (F) at (15,0,2);
  \coordinate (G) at (13,2,2);
  \coordinate (H) at (15,2,2);
\draw[fill=blue!20] (A) -- (B) -- (D) -- (C) -- cycle;
\draw[thick] (G) -- (A);
\draw[thick] (G) -- (B);
\draw[thick] (G) -- (D);
\draw[thick] (G) -- (C);
\draw[dashed] (E) -- (A) -- (B);
\draw[dashed] (A) -- (C);
\draw[dashed] (E) -- (F) -- (B) -- (D) -- (C) -- (G) -- cycle;
\draw[dashed] (F) -- (H) -- (G);
\draw[dashed] (D) -- (H);
\end{tikzpicture}

\vspace*{0.4cm}
%
%
\begin{tikzpicture}[xscale=0.6, yscale=0.6] 
  \coordinate (A) at (0,0,0);
  \coordinate (B) at (2,0,0);
  \coordinate (C) at (0,2,0);
  \coordinate (D) at (2,2,0);
  \coordinate (E) at (0,0,2);
  \coordinate (F) at (2,0,2);
  \coordinate (G) at (0,2,2);
  \coordinate (H) at (2,2,2);
  \coordinate (K) at (0,1,2);
\draw[thick] (A) -- (E) -- (F) -- (B) -- cycle;
\draw[thick] (A) -- (C) -- (G) -- (E) -- cycle;
\draw[thick] (A) -- (B) -- (D) -- (C) -- cycle;
\draw[thick] (E) -- (F) -- (H) -- (G) -- cycle;
\draw[thick] (F) -- (B) -- (D) -- (H) -- cycle;
\draw[thick] (G) -- (H) -- (D) -- (C) -- cycle;
\draw[fill=blue] (K) circle (0.10cm);
\node[left] at (K) {$a$};
\node[] at (3.5,1,1) {$=$};
%
%
  \coordinate (A) at (5,0,0);
  \coordinate (B) at (7,0,0);
  \coordinate (C) at (5,2,0);
  \coordinate (D) at (7,2,0);
  \coordinate (E) at (5,0,2);
  \coordinate (F) at (7,0,2);
  \coordinate (G) at (5,2,2);
  \coordinate (H) at (7,2,2);
  \coordinate (K) at (5,1,2);
\draw[fill=blue!20] (A) -- (E) -- (F) -- (B) -- cycle;
\draw[thick] (K) -- (A);
\draw[thick] (K) -- (E);
\draw[thick] (K) -- (F);
\draw[thick] (K) -- (B);
\draw[dashed] (E) -- (A) -- (B);
\draw[dashed] (A) -- (C);
\draw[dashed] (E) -- (F) -- (B) -- (D) -- (C) -- (G) -- cycle;
\draw[dashed] (F) -- (H) -- (G);
\draw[dashed] (D) -- (H);
\node[] at (8,1,1) {$\cup$};
%
%
  \coordinate (A) at (9,0,0);
  \coordinate (B) at (11,0,0);
  \coordinate (C) at (9,2,0);
  \coordinate (D) at (11,2,0);
  \coordinate (E) at (9,0,2);
  \coordinate (F) at (11,0,2);
  \coordinate (G) at (9,2,2);
  \coordinate (H) at (11,2,2);
  \coordinate (K) at (9,1,2);
\draw[fill=blue!20] (F) -- (B) -- (D) -- (H) -- cycle;
\draw[thick] (K) -- (F);
\draw[thick] (K) -- (B);
\draw[thick] (K) -- (D);
\draw[thick] (K) -- (H);
\draw[dashed] (E) -- (A) -- (B);
\draw[dashed] (A) -- (C);
\draw[dashed] (E) -- (F) -- (B) -- (D) -- (C) -- (G) -- cycle;
\draw[dashed] (F) -- (H) -- (G);
\draw[dashed] (D) -- (H);
\node[] at (12,1,1) {$\cup$};
%
%
  \coordinate (A) at (13,0,0);
  \coordinate (B) at (15,0,0);
  \coordinate (C) at (13,2,0);
  \coordinate (D) at (15,2,0);
  \coordinate (E) at (13,0,2);
  \coordinate (F) at (15,0,2);
  \coordinate (G) at (13,2,2);
  \coordinate (H) at (15,2,2);
  \coordinate (K) at (13,1,2);
\draw[fill=blue!20] (A) -- (B) -- (D) -- (C) -- cycle;
\draw[thick] (K) -- (A);
\draw[thick] (K) -- (B);
\draw[thick] (K) -- (D);
\draw[thick] (K) -- (C);
\draw[dashed] (E) -- (A) -- (B);
\draw[dashed] (A) -- (C);
\draw[dashed] (E) -- (F) -- (B) -- (D) -- (C) -- (G) -- cycle;
\draw[dashed] (F) -- (H) -- (G);
\draw[dashed] (D) -- (H);
\node[] at (16,1,1) {$\cup$};
%
%
  \coordinate (A) at (17,0,0);
  \coordinate (B) at (19,0,0);
  \coordinate (C) at (17,2,0);
  \coordinate (D) at (19,2,0);
  \coordinate (E) at (17,0,2);
  \coordinate (F) at (19,0,2);
  \coordinate (G) at (17,2,2);
  \coordinate (H) at (19,2,2);
  \coordinate (K) at (17,1,2);
\draw[fill=blue!20] (G) -- (H) -- (D) -- (C) -- cycle;
\draw[thick] (K) -- (G);
\draw[thick] (K) -- (D);
\draw[thick] (K) -- (C);
\draw[thick] (K) -- (H);
\draw[dashed] (E) -- (A) -- (B);
\draw[dashed] (A) -- (C);
\draw[dashed] (E) -- (F) -- (B) -- (D) -- (C) -- (G) -- cycle;
\draw[dashed] (F) -- (H) -- (G);
\draw[dashed] (D) -- (H);
\end{tikzpicture}  
\end{center}
\caption{Decomposition of a cube into pyramids. If the apex is a
  vertex, then there are three faces that do not contain the apex. If
  the apex is on an edge the number of faces increases to four. Not
  shown: An apex on a face results in five and an apex in the interior
results in six pyramids.} 
\label{fig:cubesDecomp}
\end{figure}

\subsection{Pyramidal Decomposition Algorithm} 
The pyramidal decomposition can be repeated for any or all facets in
$\faces{\ab}{P}$. This leads to a decomposition of $P$ into iterated
pyramids with disjoint interiors. This construction will be referred
to as the pyramidal decomposition of $P$. In the extreme case, the
iteration can be continued until the last bases are vertices. In this
case the pyramidal decomposition leads to a decomposition of $P$ into
simplices. However, as we will see below, the decomposition is usually
stopped sooner, until all final bases satisfy a certain specified
stopping criterion. In addition, the final decomposition will depend
on how the apices are selected.
We will denote by $\leaves{P}$ the set of final bases and
write $K=\#\leaves{P}$.
The recursive algorithm is described in~\ref{algo:pyraDecomp}.
\begin{algorithm}\label{alg:pyraDecomp}
\caption{Pyramidal Decomposition Algorithm}
\label{algo:pyraDecomp}
\begin{algorithmic}
\Function {pyraDecomp}{$F$}
\If{$F$ satisfies the stopping criterium}
\State \Return
\Else
  \State Select an apex $\ab\in F$ 
  \ForAll{$F' \in \faces{\ab}{F}$}
     \State \Call{pyraDecomp}{$F'$}
  \EndFor 
\EndIf
\EndFunction
\end{algorithmic}
\end{algorithm}

The decomposition algorithm can be applied to any type of polytope. In
particular, in the following we will work with 
decompositions of Cartesian
product $P=P_x\times P_y$. For such a polytope, the apex is 
$\ab=(\ab_x,\ab_y)$ where $\ab_x \in P_x$ and $\ab_y \in P_y$
and facet sets can be determined from the facets of $P_x$ and  $P_y$
as follows
\begin{equation*}
  \faces{\ab}{P} =
  \Big\{ F_x \times P_y : F_x \in \faces{\ab_x}{P_x} \Big\} \cup
  \Big\{ P_x \times F_y : F_x \in \faces{\ab_y}{P_y} \Big\}.
\end{equation*}
By recursion, we obtain a decomposition into convex hulls
\begin{equation}\label{decompPxy}
P_x\times P_y = \bigcup_{k=1}^K \conv(A_k,B_k) 
\end{equation}
where 
\begin{equation*}
  \begin{aligned}
    A_k &= \big[ \ab_0^{(k)} \dots, \ab^{(k)}_{\ell_k}\big]\\ 
    B_k &= F_x^{(k)} \times F_y^{(k)}
  \end{aligned}
\end{equation*}
Here, $A_k$ is an $\ell_k$-dimensional simplex in $\R^6$ and $F_x^{(k)}$ and
$F_y^{(k)}$ are faces of $P_x$ and $P_y$. The number $\ell_k$ is
bounded by the dimension of $P_x\times P_y$.

\subsection{Decomposition Algorithm for Truncated Kernels}
The final decomposition of algorithm~\ref{alg:pyraDecomp} depends on
the stopping criterion and the way the apices are selected. This
section describes a particular choice that is appropriate for the
integrals \eqref{def:integral} where the truncation parameter of the
kernel is $\rho^*$.

First, we introduce some terminology. For faces $F_x\subset P_x$ and
$F_y\subset P_y$ we say that $F=F_x\times F_y$ is
singular if $F_x\cap F_y$ is nonempty. This is equivalent with
$r_{min}(F)=0$.  Since subdivision \eqref{def:triangul} is assumed to be conforming,
the intersection is a common face. If the intersection is empty we say
that $F$ is nonsingular.  If $r_{min}(F)\geq\rho^*$ we say that $F$ is
far, if $r_{max}(F)\leq\rho^*$ then $F$ is near.  If $F$ is neither
near nor far it is mixed, which is equivalent to
$r_{min}(F)< \rho^*< r_{max}(F)$.

\begin{definition}\label{def:stop}
  \emph{Stopping Criterium.} The pyramidal decomposition is stopped if
  $F$ is nonsingular and either near or far.
\end{definition}

\begin{definition}\label{def:apex}
\emph{Apex Selection.} If $F=F_x\times F_y$ is singular 
the apex is $\ab=(\ab_x,\ab_x)$ where $\ab_x$ is a common vertex in
$F_x$ and $F_y$. If $F$ is nonsingular, then choose $\ab_x \in F_x$
and $\ab_y \in F_y$ such that $\abs{\ab_x - \ab_y} < \rho^*$.
\end{definition}

\paragraph{Remarks}
\begin{enumerate}
  \item An apex has to be selected only for faces that do not satisfy the
    stopping criterion. This is the case when $F$ is singular or
    otherwise, if $F$ is mixed. In the former case there is always a vertex in
    the intersection. In the latter case there is always
    a pair of points in $\ab_x \in F_x$ and $\ab_y\in F_y$ such that
    $\abs{\ab_x-\ab_y}<\rho^*$. 
  \item Unless there is exactly one common vertex the choice of apex
    is not unique. The selection of the apex has an effect on final the number of
    convex hulls $K$. Here the general strategy is to minimize the set
    $\faces{\ab}{F}$. This will be discussed in Section~\ref{sec:implementation}.
\end{enumerate}  
Figure~\ref{fig:pyraLattice} illustrates the resulting decomposition for a pair of mixed triangles.

\begin{figure}
\begin{center}
 \begin{tikzpicture}[xscale=1.65, yscale=1.6]
  \node (A) at (5,2){\scalebox{0.8}{$[\vb_0 \vb_1 \vb_2] \times [\wb_0 \wb_1 \wb_2]$}};
  \node (B) at (4,1){\scalebox{0.8}{$[\vb_1 \vb_2] \times [\wb_0 \wb_1 \wb_2]$}};
  \node (C) at (6.5,1){\scalebox{0.8}{$[\vb_1 \vb_0 \vb_2] \times [\wb_1 \wb_2]$}};
  \node (D) at (3,0){\scalebox{0.8}{$[\vb_2] \times [\wb_0 \wb_1 \wb_2]$}};
  \node (E) at (5,0){\scalebox{0.8}{$[\vb_1 \vb_2] \times [\wb_1 \wb_2]$}};
  \draw[-,thick] (A)--(B);
  \draw[-,thick] (A)--(C);
  \draw[-,thick] (B)--(D);
  \draw[-,thick] (B)--(E);
  \coordinate (v1) at (0.7,2);
  \coordinate (v2) at (0,1.5);
  \coordinate (v0) at (0.8,0.8);
  \coordinate (w0) at (1.2,1.3);
  \coordinate (w2) at (2,2);
  \coordinate (w1) at (1.9,0.8);
  \draw[-,thick] (v0)--(v1);
  \draw[-,thick] (v1)--(v2);
  \draw[-,thick] (v2)--(v0);
  \draw[-,thick] (w0)--(w1);
  \draw[-,thick] (w1)--(w2);
  \draw[-,thick] (w2)--(w0);
  \draw[dashed] (v0)--(w0);
  \draw[dashed] (v1)--(w0);
  \node[below] at (v0) {\scalebox{0.8}{$\vb_0$}};
  \node[below] at (v2) {\scalebox{0.8}{$\vb_2$}};
  \node[above] at (v1) {\scalebox{0.8}{$\vb_1$}};
  \node[left] at (w0) {\scalebox{0.8}{$\wb_0$}};
  \node[right] at (w1) {\scalebox{0.8}{$\wb_1$}};
  \node[right] at (w2) {\scalebox{0.8}{$\wb_2$}};
\end{tikzpicture}
\end{center}
\caption{Pyramidal decomposition of two nonintersecting triangles.
  Dashed lines indicate pairs of vertices with distances less than
  $R$. In the singular faces the chosen apex is the Cartesian product
  of the first vertices. In this example the set $\leaves{P}$ consists
  of three faces that satisfy $r_{min}<\rho^*$.} 
\label{fig:pyraLattice}
\end{figure}

\section{Properties of the Pyramidal Decomposition}\label{sec:analysis}
This section collects some simple properties of the final iterated
pyramids in Algorithm~\ref{alg:pyraDecomp} and shows how the
singularities and discontinuities of the
resulting convex hull integrals can be treated.

For $B\in \leaves{P_x\times P_y}$ let $\ab_0,\dots,\ab_\ell$ be
the sequence of apices that lead to $B$. Further, $F_0, \dots
F_{\ell}$ denotes the sequence of faces that came before the
final base $B$. Thus $F_0 = P_x\times P_y$, $\ab_j \in F_j$,
$F_{j+1} \in \faces{\ab_j}{F_j}$ and $B_\ell \in \faces{\ab_\ell}{F_\ell}$.
Further,
$A=[\ab_0,\dots,\ab_\ell]$ is the simplex in $\R^{2d}$ that is generated by the
apices.

\begin{lemma}\label{lem:Anear}
It holds that
\begin{equation}
r_{max}(A) < \rho^* \quad\mbox{and}\quad r_{min}(B) > 0.
\end{equation}
\end{lemma}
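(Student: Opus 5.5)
The plan is to treat the two inequalities separately. The first follows from convexity of the distance function together with the apex selection rule in Definition~\ref{def:apex}. The second follows from the stopping criterion in Definition~\ref{def:stop}, once I check that every element of $\leaves{P_x\times P_y}$ really is a face at which the recursion stopped.

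\emph{The bound $r_{max}(A)<\rho^*$.} The map $(\xb,\yb)\mapsto\abs{\xb-\yb}$ is a norm composed with the linear map $(\xb,\yb)\mapsto \xb-\yb$, so it is convex on $\R^{2d}$. A convex function attains its maximum over the simplex $A=[\ab_0,\dots,\ab_\ell]$ at a vertex. Hence $r_{max}(A)=\max_j \abs{\ab_{j,x}-\ab_{j,y}}$, where $\ab_j=(\ab_{j,x},\ab_{j,y})$. Each $\ab_j$ was chosen as the apex of a face $F_j$ that failed the stopping criterion, so Definition~\ref{def:apex} applies to it.
\begin{itemize}
\item If $F_j$ is singular, then $\ab_j=(\ab_{j,x},\ab_{j,x})$ and the distance is $0<\rho^*$. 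This uses that the truncation parameter is positive, which holds whenever the integral is nontrivial.
\item If $F_j$ is nonsingular, it is mixed, and the rule gives $\abs{\ab_{j,x}-\ab_{j,y}}<\rho^*$ directly; such points exist by Remark~1.
\end{itemize}
Taking the maximum over the finitely many vertices gives the strict inequality.

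\emph{The bound $r_{min}(B)>0$.} The intended argument is that $B=F_x\times F_y$ is a final base, so it satisfies the stopping criterion and is in particular nonsingular. Then $F_x\cap F_y=\emptyset$, and since both are compact the continuous function $\abs{\xb-\yb}$ attains a positive minimum on $B$.

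\emph{Main obstacle.} The step I expect to need care is justifying that every recursion branch terminates at a face that passed the stopping test. In particular, no branch may bottom out at a singular face with nothing left to decompose. Termination is clear, since each step passes to a facet and so the dimension strictly decreases.

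I would then argue that a singular face never reaches dimension $0$, where it would be $[\vb]\times[\vb]$.
\begin{itemize}
\item Suppose a one-dimensional singular face is $[\ub,\vb]\times[\wb]$ with $\wb\in\{\ub,\vb\}$. Its common vertex is $\wb$ alone, so the apex is $(\wb,\wb)$. The only facet avoiding the apex is the other vertex pair, which is nonsingular because $\ub\neq\vb$. The symmetric case $[\wb]\times[\ub,\vb]$ is the same.
\item More generally, a singular face of positive dimension always has a common vertex, so an apex exists. Its facet set $\faces{\ab}{F}$ is nonempty and consists of faces of strictly lower dimension.
\end{itemize}
By induction on dimension, every branch from a singular face ends at a nonsingular face that is near or far. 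Therefore every $B\in\leaves{P_x\times P_y}$ is nonsingular, which completes the argument for $r_{min}(B)>0$.
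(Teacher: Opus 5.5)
Your proof is correct and matches the paper's argument. Convexity of the distance puts the maximum over $A$ at its vertices, each of which is an apex at distance below $\rho^*$ by Definition~\ref{def:apex}, and $r_{min}(B)>0$ follows from the stopping criterion; your check that the recursion never ends at a singular pair $[\vb]\times[\vb]$, and your compactness argument for strict positivity, add rigor the paper leaves implicit.
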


\begin{proof}
To show the first inequality write $\ab_k=(\ab_{x,k},\ab_{y,k})$ for the $x$- and $y$-components of
the apices. By construction of the algorithm $\abs{\ab_{x,k}-\ab_{y,k})}<\rho^*$.
To show that all points $(\xb,\yb)$ in the simplex also satisfy
$\abs{\xb-\yb} < \rho^*$, note that the function
$\rho^2(\xb,\yb) = \abs{\xb-\yb}^2$ is convex, because for two points
$(\xb_0,\yb_0)$, $(\xb_1,\yb_1)$ in $A$ the function
\begin{equation*}
  \lambda \mapsto \abs{(1-\lambda)(\xb_0-\yb_0) + \lambda(\xb_1-\yb_1) }^2
\end{equation*}
is either a constant or a parabola opening upwards. By convexity,
the maximum is on the boundary of the domain, which in
this case is a facet of $A$. Since $\rho^2$ is also convex on the
facet one can repeat the argument until it follows that the maximum
occurs on the vertices of $A$. This proves the first assertion. The
second assertion follows readily from the stopping criterion.
\end{proof}  

To derive the appropriate integration approach for the convex hulls
generated by Algorithm~\ref{alg:pyraDecomp}
we will distinguish whether the basis $B$ is near or far and whether all apices
in $A$ are singular, regular or contain types. Thus there is a total
of six different cases.

\subsection{All apices of $A$ are singular}
In this case $r_{max}(A) = 0$. For a point $\zb=(\xb,\yb) \in \conv(A,B)$ we have unique $\lambda\in [0,1]$,
$\zb_A=(\xb_A,\xb_A) \in A$ and $\zb_B=(\xb_B,\yb_B) \in B$ such that
$\zb = (1-\lambda) \zb_A + \lambda \zb_B$.
The distance function simplifies to
\begin{equation}\label{def:rho}
  \rho(\lambda) := \abs{\xb-\yb} =
 \lambda \abs{ (\xb_B-\yb_B)}.
\end{equation}
If $B$ is far, the intersection of $\rho(\lambda) = \rho^*$ is 
\begin{equation}\label{def:lambdastar}
\lambda^*= \frac{\rho^*}{\abs{\xb_B-\yb_B}},
\end{equation}
which is in the interval $(0,1]$ and a smooth function of $\zb_B$. If
$B$ is near then $0<\rho(\lambda) \leq \rho^*$ in $(0,1]$.

We turn to the integral \eqref{def:integral} the domain of
integration is replaced by the subset $\conv(A,B)$. The integrand 
can be written as
\begin{equation}\label{def:integrand1}
  \frac{k\big(\rho(\lambda)\big)}{\rho(\lambda)}
  \Phi\big( (1-\lambda) \zb_A + \lambda \zb_B\big)
 = \frac{g(\lambda,\zb_A,\zb_B)}{\lambda}
\end{equation}
where $g:[0,1]\times A\times B \mapsto \R$ is a smooth function. Since
the domain is a convex hull we use 
\eqref{intgr:convAB}. Thus the inner integral becomes
\begin{equation*}
I(\zb_A,\zb_B) = \int_0^{\lambda^*} g(\lambda,\zb_A,\zb_B)
(1-\lambda)^{d_A}\lambda^{d_B-1} \,d\lambda.
\end{equation*}
Here the denominator in \eqref{def:lambdastar} is absorbed in the
Jacobian. The smallest dimension of $B$ occurs when $P_x=P_y$ in which
case $d_B=1$. Thus the integrand is smooth. Moreover, the upper bound
is either \eqref{def:lambdastar} which is a smooth function or
$\lambda=1$. This implies that in both cases, when $B$ is near or far,
the function $I : A\times B \mapsto \R$ is smooth.

\subsection{All apices of $A$ are nonsingular}
In this case $r_{min}(A) > 0$.
For $\zb = (1-\lambda) \zb_A + \lambda \zb_B \in \conv(A,B)$
the distance function becomes
\begin{equation}\label{def:rho2}
  \rho(\lambda) = \abs{\xb-\yb} =
  \abs{ (1-\lambda)(\xb_A-\yb_A) + \lambda(\xb_B-\yb_B)},
\end{equation}
where $\rho(0) = \abs{\xb_A-\yb_A}$ and $\rho(1) = \abs{\xb_B-\yb_B}$.
Note that $\rho^2(\lambda)$ is either a parabola that opens up, or a
constant when $\rho(0)=\rho(1)$.  When $B$ is far we have
$\rho(0)<\rho^*$ and $\rho(1)\geq\rho^*$ and thus there is
$\lambda^* \in (0,1]$ such that $\rho(\lambda^*) = \rho^*$. The point
is unique, because otherwise the parabola would have a maximum between
the two solutions which is impossible. Further,
$\lambda^*=\lambda^*(\zb_A,\zb_B)$ is a smooth function because
$\rho(\lambda^*)>0$.  When $B$ is near we have $\rho(0)<\rho^*$ and
$\rho(1)\leq\rho^*$ and it follows from a similar maximum argument
that $0<\rho(\lambda)\leq\rho^*$ for $\lambda \in [0,1]$.

We turn to the inner integral that results when the domain in
\eqref{def:integral} is replaced by $\conv(A,B)$. Since $\rho(\lambda)$ is
positive, the integrand 
\begin{equation}\label{def:integrand2}
\frac{k\big(\rho(\lambda)\big)}{\rho(\lambda)}
  \Phi\big( (1-\lambda) \zb_A + \lambda \zb_B\big)
 =: g(\lambda,\zb_A,\zb_B)
\end{equation}
is a smooth function in $[0,1]\times A\times B$. Thus the inner
integral of \eqref{intgr:convAB} becomes
\begin{equation*}
I(\zb_A,\zb_B) = \int_0^{\lambda^*} g(\lambda,\zb_A,\zb_B)
(1-\lambda)^{d_A}\lambda^{d_B} \,d\lambda
\end{equation*}
which is a smooth function whenever $B$ is near or far.

\subsection{$A$ has both singular and nonsingular apices}
In this case $A$ is the convex hull of the simplices
$S=[\ab_0,\dots\ab_{s}]$ and $R=[\ab_{s+1},\dots, \ab_{\ell}]$, where
$0\leq s < \ell$. Thus $\conv(A,B) = \conv(S,R,B)$ is a triple convex
hull. Then
\begin{equation*}
\zb = (1-\lambda_1 - \lambda_2) \zb_S + \lambda_1 \zb_R + \lambda_2 \zb_B
\end{equation*}
where $\lambdab=(\lambda_1,\lambda_2)$ is in the standard triangle $\Lambda$.
Since $\xb_S=\yb_S$ the corresponding distance function is
\begin{equation}\label{def:rho3}
\rho(\lambdab) = \abs{\xb-\yb}
= \abs{ \lambda_1 (\xb_R-\yb_R) + \lambda_2 (\xb_B-\yb_B) }.
\end{equation}
The graph of $\lambdab \mapsto \rho^2(\lambdab)$ is either a plane or an
elliptic paraboloid that opens up. 

\begin{lemma}\label{lem:rho3}
It holds that
\begin{equation*}
    \rho(\lambdab) > 0 \;\mbox{for}\; \lambdab \in \Lambda \setminus \mathbf{0}.
\end{equation*}
\end{lemma}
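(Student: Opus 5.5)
The plan is to reduce the claim to the nonsingularity of a single face in the decomposition chain, instead of studying the quadratic form $\rho^2(\lambdab)$ directly. The first step is to record a monotonicity property of Algorithm~\ref{alg:pyraDecomp}. Each face $F_{j+1}\in\faces{\ab_j}{F_j}$ is a face of $F_j$, so the faces are nested: $F_0\supset F_1\supset\dots\supset F_\ell\supset B$. Write $F_j=F_{x,j}\times F_{y,j}$. Then $F_{x,j}\cap F_{y,j}$ can only shrink as $j$ grows, so once some $F_j$ is nonsingular, every later face, and also $B$, is nonsingular. By Definition~\ref{def:apex}, the apex $\ab_j$ is singular, i.e.\ of the form $(\ab_x,\ab_x)$, exactly when $F_j$ is singular. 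Hence the singular apices form an initial segment $\ab_0,\dots,\ab_s$ and the nonsingular ones form the tail $\ab_{s+1},\dots,\ab_\ell$. This justifies the splitting $S=[\ab_0,\dots,\ab_s]$, $R=[\ab_{s+1},\dots,\ab_\ell]$ used in the statement. It also gives $\ab_j\in F_j\subset F_{s+1}$ for all $j\geq s+1$, so $R\subset F_{s+1}$, and $B\subset F_{s+1}$.

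The second step is the core of the argument. Take $\lambdab\in\Lambda\setminus\mathbf{0}$ and set $\mu=\lambda_1+\lambda_2>0$. Then
\begin{equation*}
  \lambda_1(\xb_R-\yb_R)+\lambda_2(\xb_B-\yb_B) = \mu\,(\xb'-\yb'),
  \qquad
  (\xb',\yb') := \frac{\lambda_1}{\mu}\zb_R+\frac{\lambda_2}{\mu}\zb_B .
\end{equation*}
The point $(\xb',\yb')$ is a convex combination of a point of $R$ and a point of $B$, and both sets lie in the convex set $F_{s+1}$. Therefore $(\xb',\yb')\in F_{s+1}=F_{x,s+1}\times F_{y,s+1}$. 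Since $F_{s+1}$ is nonsingular, $F_{x,s+1}\cap F_{y,s+1}=\emptyset$, so $\xb'\neq\yb'$. More quantitatively, $\abs{\xb'-\yb'}\geq r_{min}(F_{s+1})>0$ by compactness. Combined with \eqref{def:rho3}, this yields $\rho(\lambdab)=\mu\abs{\xb'-\yb'}\geq (\lambda_1+\lambda_2)\,r_{min}(F_{s+1})>0$.

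The main obstacle is the first step: verifying carefully that singularity is inherited downward along the chain and that the apex rule really makes the singular apices contiguous at the start. Once that is in place, the rest is the short convexity computation above. As a by-product, the argument gives the linear lower bound $\rho(\lambdab)\gtrsim\lambda_1+\lambda_2$. This bound is what one would use afterwards to show that the singularity of the integrand sits only at the vertex $\lambdab=\mathbf{0}$ and behaves like $\abs{\lambdab}^{-1}$.
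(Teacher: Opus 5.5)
Your proof is correct and follows the paper's own argument. You normalize by $\lambda_1+\lambda_2$ to get a point of $\conv(R,B)\subset F_{s+1}$, and the nonsingularity of $F_{s+1}$ then excludes $\xb'=\yb'$. Your first step spells out what the paper compresses into ``by the design of the algorithm'' (the faces are nested, and the singular apices form an initial segment); your bound $\rho(\lambdab)\geq(\lambda_1+\lambda_2)\,r_{min}(F_{s+1})$ is a mild quantitative strengthening of the same reasoning.
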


\begin{proof}
  Consider the equation
\begin{equation}\label{zeroInConv}
\lambda_1 (\xb_R - \yb_R) + \lambda_2 (\xb_R - \yb_R) = \mathbf{0}.
\end{equation}
If $(\lambda_1,\lambda_2) \in \Lambda \setminus \mathbf{0}$ was a solution then we can
divide the equation by $\lambda_1+\lambda_2$ to get
\begin{equation*}
\mu_1 (\xb_R - \yb_R) + \mu_2 (\xb_R - \yb_R) = \mathbf{0}
\end{equation*}
with $\mu_i = \lambda_i/(\lambda_1+\lambda_2)$, $i=1,2$. Since
$\mu_1+\mu_2=1$ it then follows that $\mathbf{0} \in
\conv(R,B)$. By the design of the algorithm 
$\conv(R,B) \subset F_{s+1}$, where $\ab_s$ is the last singular
vertex. Since $F_{s+1}$ is therefore nonsingular we arrive at a
contradiction. Thus $\lambdab=0$ is the only solution to \eqref{zeroInConv}.
\end{proof}

We have to reduce the $\lambdab$-integral in
\eqref{intgr:convSRB} to the intersection of $\Lambda$ with the
interior of the ellipse
\begin{equation}\label{def:LambdaStar}
\Lambda^* = \{ \lambdab \in \Lambda : \rho(\lambdab) \leq R \}.
\end{equation}

Since $r_{max}(S)=0$ and $r_{max}(R)<\rho^*$, it follows that $\rho(0,0)=0$,
$\rho(0,1)<\rho^*$. If $B$ is near, i.e., $r_{max}(B) < \rho^*$ then
$\rho(1,0)<R$ and therefore $\rho<\rho^*$ on the three vertices of
$\Lambda$. From convexity it follows that $\Lambda$ is entirely inside
the ellipse and thus $\Lambda^* = \Lambda$.
If $B$ is far, i.e., $r_{min}(B) \geq \rho^*$ then $\rho(1,0)\geq \rho^*$. In
this case the ellipse has one intersection point with each edge
$[(0,0), (0,1)]$ and $[(1,0), (0,1)]$.

We now turn to the inner integral~\eqref{intgr:convSRB}. Setting
\begin{equation*}
  g(\lambda,\zb_S,\zb_R,\zb_B) =
k\big(\rho(\lambdab)\big)
  \Phi\big( (1-\lambda_1-\lambda_2) \zb_S + \lambda_1 \zb_R + \lambda_2 \zb_B\big)
\end{equation*}
we have to show that the function
\begin{equation*}
  I(\zb_S,\zb_R,\zb_B) := \int_{\Lambda^*} 
  \frac{g(\lambdab,\zb_S,\zb_R,\zb_B)}{\rho(\lambdab)}
  (1-\lambda_1 -\lambda_2)^{d_S}\lambda_1^{d_R} \lambda_2^{d_B} \,d\lambdab
\end{equation*}
is smooth.  By Lemma~\ref{lem:rho3} the only critical point is the origin,
where the denominator and the Jacobian vanish.  However,
singularities will appear in the higher derivatives of the
fraction. To remedy this introduce the change of variables
\begin{equation*}
  \begin{aligned}
    \lambda_1 &= \xi (1-\eta),\\
    \lambda_2 &= \xi \eta.
  \end{aligned}
\end{equation*}
The Jacobian is $\xi$ and 
the distance function becomes
\begin{equation}\label{rhoInXiEta}
\rho\big( (1-\eta)\xi,\eta\xi \big) = \xi \rho\big( (1-\eta),\eta \big),
\end{equation}
Thus the integral is transformed to
\begin{equation}\label{def:I:lambda3}
  I(\zb_S,\zb_R,\zb_B) = \int_{X^*} 
  \frac{g(\xi,\eta,\zb_S,\zb_R,\zb_B)}{\rho\big( (1-\eta),\eta \big)}
  \xi^{d_R+d_B} (1-\xi)^{d_S}(1-\eta)^{d_R} \eta^{d_B} \,d\xi d\eta.
\end{equation}
In the denominator the factor $\xi$ from \eqref{rhoInXiEta}
cancels with the Jacobian.
The remaining denominator is bounded away from zero and the integrand
is now a smooth function. 

The domain $X^*$ is the parameter set of the transformation. It is
shown in Figure~\ref{fig:trnsDmn}.
If $B$ is near, then $X^* = [0,1]^2$ is the unit square . If
$B$ is far, the domain decomposes into two pieces
$X^* = X^*_1 \cup X^*_2 $, where \eqref{def:LambdaStar}
\begin{equation*}
  \begin{aligned}
    X^*_1 &= [0,1]\times [0,\eta^*],\\
    X^*_2 &= \big\{ (\xi,\eta) :  0 \leq \xi \leq \xi^*(\eta),\;
            \eta* \leq \eta \leq 1 \big \}.
  \end{aligned}
\end{equation*}

\begin{figure}
\begin{center}
\centerline{
\begin{tikzpicture}[xscale=0.6, yscale=0.6]
\begin{axis}[
  xmin=0, xmax=1, ymin=0, ymax=1,
  xtick={0,1},
  ytick={0,1},
  xticklabels={\Large $0$,\Large$ 1$},
  yticklabels={\Large $0$,\Large $1$},
  enlargelimits=false,
  thick
 ]
 \addplot [
 samples=100,
 color=black,
 thick       
 ]
 {1-x};      
 \addplot [
 domain=0:0.6,
 samples=100, 
 color=black, 
 thick        
 ]
 {sqrt(1-x^2)/2};
 \node at (0.2,0.3){\Large $\Lambda_1^*$};
 \node at (0.2,0.6){\Large $\Lambda_2^*$};
\end{axis}
\end{tikzpicture}
\hspace{0.7cm}
\begin{tikzpicture}[xscale=0.6, yscale=0.6]
\begin{axis}[
  xmin=0, xmax=1, ymin=0, ymax=1,
  xtick={0,1},
  ytick={0,0.5991,1},
  xticklabels={\Large $0$,\Large $1$},
  yticklabels={\Large $0$,\Large $\eta^*$,\Large $1$},
  enlargelimits=false,
  thick
 ]
 \addplot [
 samples=100,
 color=black,
 thick           
 ]
 {0.5991};           
 \addplot [
 domain=0.343:1,
 samples=100, 
 color=black, 
 thick        
 ]
 {2/17 + sqrt( 4/(17*x^2) - 1/289) };  
 \node at (0.4,0.3){\Large $X_1^*$};
 \node at (0.4,0.8){\Large $X_2^*$};
 \node at (0.8,0.85){\Large $\xi^*(\eta)$};
\end{axis}
\end{tikzpicture}
}
\end{center}
\caption{The domains $\Lambda^*$ (left) and $X^*$ (right).} 
\label{fig:trnsDmn}
\end{figure}

Here the points $\eta^*$ and $\xi^*(\eta)$ are intersection points
where the paraboloid reaches the value $\rho=R$
\begin{equation*}
  \begin{aligned}
    \rho\big(1-\eta^*,\eta^*\big) &= R,\\
    \rho\big((1-\eta)\xi^*(\eta),\eta \xi^*(\eta)\big) &= R.   
  \end{aligned}
\end{equation*}
By construction, these quadratic equations have unique solutions in
$[0,1]$ which are smooth functions of $\xb_R,\xb_B,\yb_R,\yb_B$.
This shows that $I : S\times R \times B \to \R$ defined in
\eqref{def:I:lambda3} is a smooth function.

\section{Implementation and Numerical Results}\label{sec:implementation}
The pyramidal decomposition described in Section~\ref{sec:decomp} is
applicable for any types of finite convex polytopes and any
order of piecewise polynomial finite elements. However,
we have implemented the method only for pairs of triangles and
piecewise constant elements in space and time. The implementation can
be obtained from \texttt{github.com/johtausch/welle}.

We first give more detail how to perform
the stopping criterion and the apex selection for two faces of
the triangles $P_x$ and $P_y$. A triangle has seven faces (the triangle
itself, three edges and three vertices). For each of the 49 pairs of
faces $F=F_x\times F_y$ the following quantities are precomputed:\\
\begin{tabbing}
  \qquad \= $r_{max}$\quad \= maximal distance of points in $F_x$ and $F_y$,\\
  \> $r_{min}$ \> minimal distance distance of points in $F_x$ and $F_y$,\\
  \> $r_{minv}$ \> minimal distance of two vertices in $F_x$ and $F_y$,\\
  \> $\ab_{max} = (\xb_{max},\yb_{max})$\quad \; \= points where the maximum occurs,\\
  \> $\ab_{min} = (\xb_{min},\yb_{min})$ \> points where the minimum occurs,\\
  \>  $\ab_{minv} = (\xb_{minv},\yb_{minv})$\> vertices with minimal distance.\\
\end{tabbing}
These geometry calculations are elementary and inexpensive as they only involve linearly
constrained optimizations
with the quadratic function $(\xb,\yb) \mapsto \abs{\xb-\yb}^2$.
The maximum distance always occurs on a pair of vertices, whereas the
minimum can also occur on an edge or in the interior. Therefore it is
necessary to store the minimal vertex distance separately.
The stopping criterion in \ref{def:stop} is now satisfied if
either $r_{max}\leq\rho^*$ or $r_{min}\geq\rho^*$. For the apex
selection in \ref{def:apex} we adopt the strategy\\
\begin{center}
  If $r_{minv}\leq \rho^*$ choose $\ab_{minv}$, otherwise, choose $\ab_{min}$.
\end{center}
\vspace*{0.2cm}
This choice ensures that the convex hull of all apices will be
near and also prefers vertices over the minimizer $\ab_{min}$. This 
reduces the number of faces in $\faces{\ab}{F}$ if the minimizer is on
an edge or a face. 

After these geometry calculations have been performed, the integrals
for all $d$-values are computed for which the  function $g_d$ is non-zero.
Algorithm~\ref{algo:SetupMatrix} describes the complete process to
setup all matrices $A_d$.

\begin{algorithm}
  \caption{Algorithm to set up all nonzero matrix entries $A_{d,ij}$.}
\label{algo:SetupMatrix}
\begin{algorithmic}
\For{$i\in\{1,\dots,N_s\}$}
\For{$j\in\{1,\dots,N_s\}$}
\State Perform the geometry calculations for $P_i\times P_j$.
\State Set $d_{min}=\max\{ \lfloor r_{min}/h \rfloor,1\}$,
\State Set $d_{max}=\lceil r_{max}/h \rceil$
\State Set $B_{-1} = B_0 = 0$
\For{$d\in\{d_{min},\dots,d_{max}+1\}$}
\State Compute $B_d = \int_{P_i}\int_{P_j}  g_d(\abs{\xb-\yb}) d\yb
d\xb$ using pyramidal decomposition
\EndFor

\For{$d\in\{d_{min},\dots,d_{max}\}$}
\State Set $A_{d,ij} = -B_{d-1} + 2 B_d - B_{d+1}$
\EndFor
\EndFor 
\EndFor 
\end{algorithmic}
\end{algorithm}
Since the integers from $d_{min}$ and
$d_{min}$ for a given pair of triangles is a small subset of all $d\in
\{0,\dots,N_t\}$ the matrices $A_d$ will be sparse. Furthermore, if
\begin{equation*}
d > \frac{\mbox{diam}(\Gamma)}{h} + 1
\end{equation*}
then $A_d$ is the zero matrix.

\subsection{Convergence with respect to the quadrature order}\label{sec:quaderrConv}
We have tested the method for the case that the scatterer is the unit
sphere which has been triangulated in sequence of quasiuniform refinements consisting
of 48, 192, 768 and 3072 triangles. These meshes will be referred to as
$M_0$, $M_1$, $M_2$ and $M_3$, respectively. The iterated integrals in the
convex hulls either involve integrals over an interval and  integrals
over simplices. For the former we use Gauss Jacobi integration that is
rescaled to the length of the interval. For the latter we paramterize
the simplex with the standard simplex using an affine map. For the
quadrature on the standard simplex we use rules that are exact for
multvariable polynomials up to a specified degree that approximately
minimize the number of nodes. More details on these rules can be found
in~\cite{slobodkins-tausch23}.

The first experiment is to test the convergence 
with respect to increasing the degree of precision of the
quadrature rules. It is designed as follows: We let $N_t=20$ and
choose mesh $M_0$.
Algorithm~\ref{algo:SetupMatrix} is modified to compute only one
column of the matrices $A_d$, where all integrals and their
types are recorded. The process is repeated for quadrature orders
between 1 and 19 and the convergence of each integral is
observed. Figure~\ref{fig:convergence:type} records the maximal
errors within each of the six types of integrals described in
section~\ref{sec:analysis}. The reference value is the numerically
computed integral with
order 21. One can see that in all cases the rate of convergence is
exponential, which confirms that the functions $I(\zb_A,\zb_B)$ and
$I(\zb_S,\zb_R,\zb_B)$ are indeed smooth.
\begin{figure}
\begin{center}
\includegraphics[width=6cm]{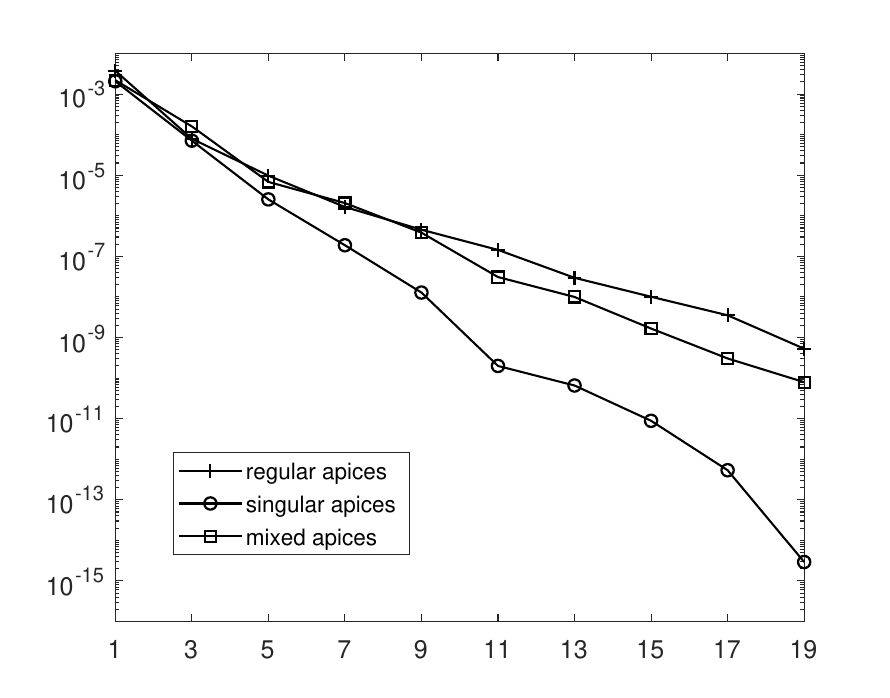}
\includegraphics[width=6cm]{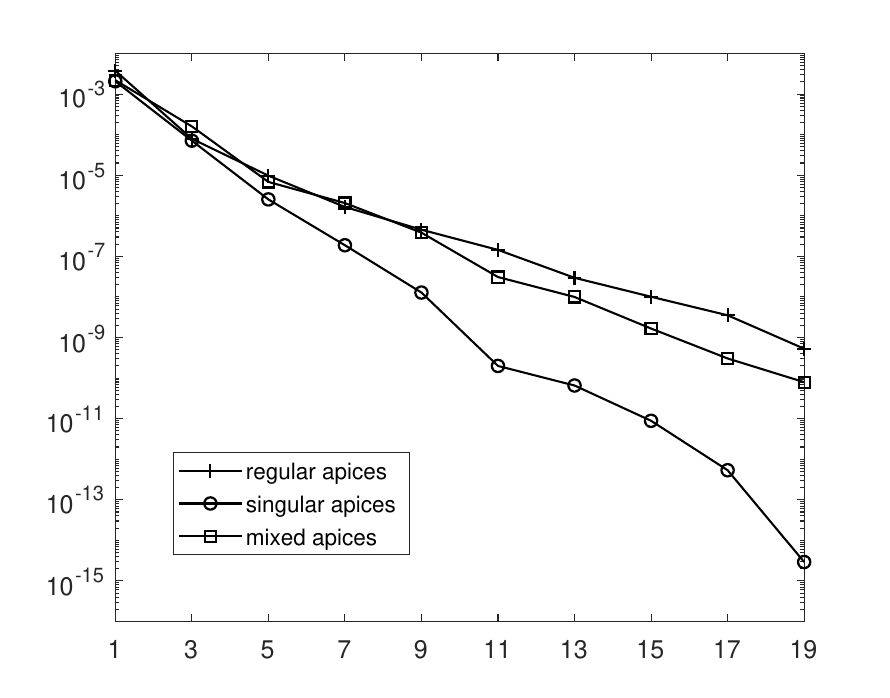}
\end{center}
\caption{Quadrature error for full regions (left) and truncated
  regions (right).} 
\label{fig:convergence:type}
\end{figure}

\subsection{Test problem with spherical scatterer}
This experiment illustrates the stability and accuracy for a spherical scatterer.
If the spatial dependence of the right hand side in \eqref{def:intgrEqn} is a
spherical harmonic in space, i.e., $f(\xb,t) = g(t)Y_n^m(\xb)$ then the solution is 
of the form $q(\xb,t) = \phi(t)Y_n^m(\xb)$. When $n=0$ the function
$\phi$ has a simple representation
\begin{equation*}
  \phi(t) = 2 \sum_{k=0}^{\lfloor t/2 \rfloor} g'(t-2k)
\end{equation*}
see, \cite{sauter-veit13}. We use this known solution with
$g(t) = t^4 \exp(-2t)/2$ as a reference
for the numerical solution obtained with the fully
discrete Galerkin scheme. The three parameters that determine the
accuracy are the
spatial and temporal meshwidths and the quadrature order. We use
piecewise constant elements and the triangulations $M_0$ to $M_3$
described in section~\ref{sec:quaderrConv}. For time, consider uniform
subdivisions of the interval $[0,20]$ in $400\cdot 2^k$ subintervals,
where $k\in\{0,\dots,4\}$. These meshes will be referred to by $T_k$.

\begin{table}[h]
\label{tab:errors}
\begin{center}
\begin{tabular}{ c|cccc}
 &  $M_0$ &   $M_1$ &  $M_2$ &  $M_3$ \\
  \hline
$T_0$ & 0.1965 &  0.1427 &  0.0466 &  0.1088\\
$T_1$ & 0.1988 &  0.1466 &  0.0434 &  0.1088\\
$T_2$ & 0.2000 &  0.1479 &  0.0427 &  0.0214\\
$T_3$ & 0.2003 &  0.1485 &  0.0425 &  0.0121\\
$T_4$ & 0.2005 &  0.1488 &  0.0424 &  0.0112
\end{tabular}
\end{center}
\caption{Discretization errors.}
\end{table}

Table~\ref{tab:errors} lists the errors of the solution of the fully
discrete scheme when the quadrature order is set to five. The
numbers shown are the quantities
\begin{equation*}
\max_{1\leq i\leq N_t}
  \abs{\frac{1}{\abs{\Gamma}}\int_\Gamma q_h(\xb,t_i) dS_\xb - \phi(t_i)}.
\end{equation*}
One can see that the error decreases as the spatial and temporal
meshwidth are refined. The rate of convergence appears somewhat faster than
the $O(h_t + h_s)$ rate that one would expect from piecewise constant
elements. This may be explained by the fact that the true solution for
a fixed time is constant and hence in the spatial ansatz space. However, the
spatial refinement still matters as the sphere is approximated by flat
triangles. The results seen in the table are also consistent with
unconditional stability. One should note
here that even though the retarded single layer potential is 
coercive in the sense of~\cite{bamberger-duong86}, the numerical analysis of the Galerkin
scheme is still not completely known. 

\begin{figure}
\begin{center}
\includegraphics[width=13cm]{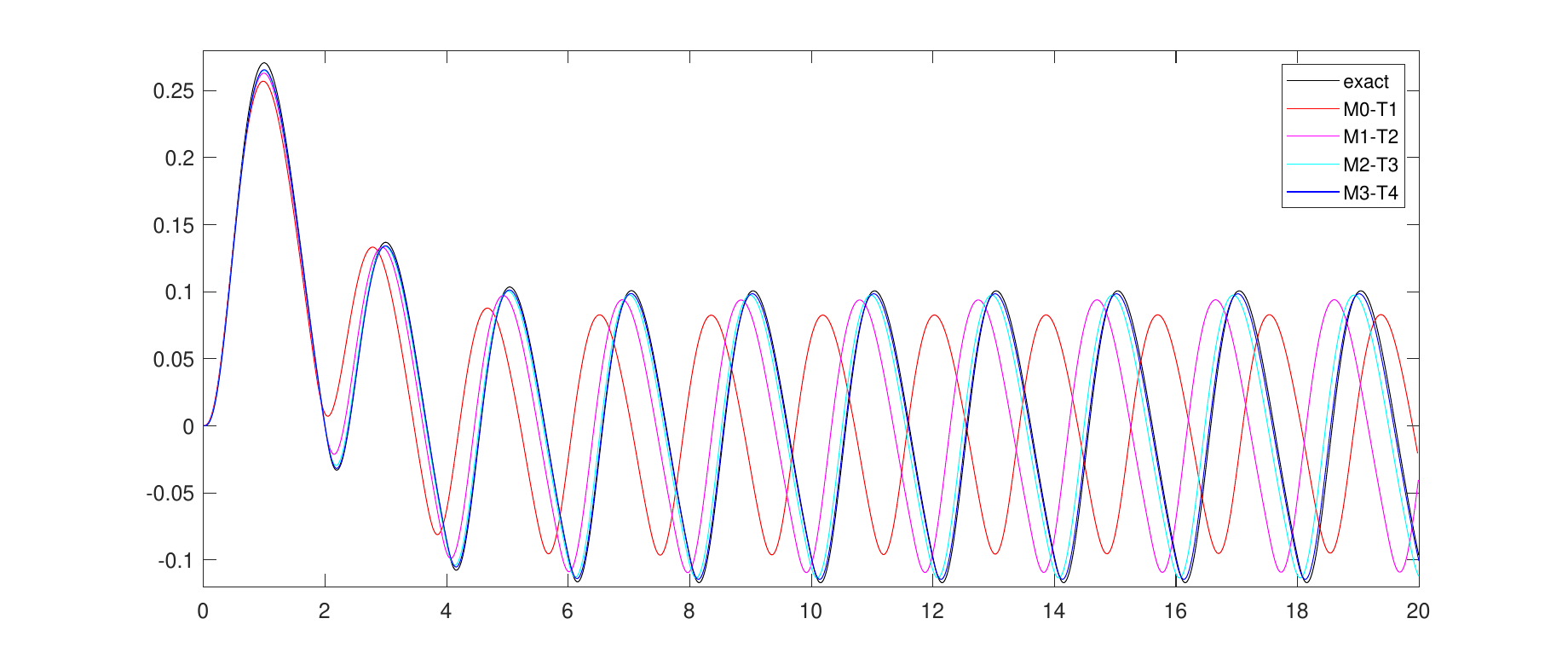}
\end{center}
\caption{Comparison of the exact density with the density obtained for
several mesh refinements.} 
\label{fig:solutions}
\end{figure}

Tables~\ref{tab:nonzeros} and~\ref{tab:nIntegrals} record the number
of nonzero entries in all $A_d$'s and the total number of convex hulls
integrals that had to be calculated to setup the matrices. The latter
number depends on the number of terms in the pyramidal decomposition.
This can change considerably, typical values are between 1 and 15, but
tends to be more frequently on the smaller side. The average is shown
in the brackets of table of~\ref{tab:nIntegrals}. This factor remains
mostly constant as the spatial and temporal meshes are refined.
Table~\ref{tab:cputime} displays the total CPU time to setup all $A_d$.
These numbers were obtained using a single processor thread. The
matrix entries could be easily evaluated in parallel, but this has not
been exploited in this implementation.

\begin{table}[h]
\label{tab:nonzeros}
\begin{center}
\begin{tabular}{ c|cccc}
  &  $M_0$ &   $M_1$ &  $M_2$ &  $M_3$ \\
  \hline
$T_0$ & 4.90e+04 & 4.41e+05 & 4.14e+06 & 4.26e+07\\
$T_1$ & 9.40e+04 & 8.12e+05 & 7.12e+06 & 4.26e+07\\
$T_2$ & 1.84e+05 & 1.55e+06 & 1.31e+07 & 1.14e+08\\
$T_3$ & 3.62e+05 & 3.03e+06 & 2.50e+07 & 2.10e+08\\
$T_4$ & 7.20e+05 & 5.97e+06 & 4.88e+07 & 4.01e+08
\end{tabular}
\end{center}
\caption{Total number of nonzeros in all $A_d$.}
\end{table}

\begin{table}[h]
\label{tab:nIntegrals}
\begin{center}
\begin{tabular}{ c|cccc}
 &  $M_0$ &   $M_1$ &  $M_2$ &  $M_3$ \\
  \hline
$T_0$ & 4.90e+04 (4.3) & 4.41e+05 (3.9) & 4.14e+06 (3.5) & 4.26e+07 (2.9)\\  
$T_1$ & 9.40e+04 (4.5) & 8.12e+05 (4.2) & 7.12e+06 (3.9) & 4.26e+07 (2.9)\\  
$T_2$ & 1.84e+05 (4.6) & 1.55e+06 (4.3) & 1.31e+07 (4.1) & 1.14e+08 (3.9)\\  
$T_3$ & 3.62e+05 (4.6) & 3.03e+06 (4.4) & 2.50e+07 (4.3) & 2.10e+08 (4.1)\\  
$T_4$ & 7.20e+05 (4.6) & 5.97e+06 (4.4) & 4.88e+07 (4.4) & 4.01e+08 (4.3)    
\end{tabular}
\end{center}
\caption{Total number of convex hull integrals with factors.}
\end{table}

\begin{table}[h]
\label{tab:cputime}
\begin{center}
\begin{tabular}{ c|cccc}
  &  $M_0$ &   $M_1$ &  $M_2$ &  $M_3$ \\
  \hline
$T_0$ & 4.72e-01 & 3.41e+00 & 2.47e+01 & 2.39e+02 \\
$T_1$ & 9.02e-01 & 8.25e+00 & 6.11e+01 & 2.37e+02 \\
$T_2$ & 1.81e+00 & 1.59e+01 & 1.30e+02 & 1.04e+03 \\
$T_3$ & 3.64e+00 & 3.10e+01 & 2.52e+02 & 1.96e+03 \\
$T_4$ & 6.84e+00 & 5.62e+01 & 4.86e+02 & 3.85e+03 
\end{tabular}
\end{center}
\caption{Total cpu time to setup the matrices $A_d$ in seconds.}
\end{table}

Figure~\ref{fig:quadSoln} explores the effect of the quadrature on the
numerical approximation of the density. The solution of the fully discrete is
calculated for the meshes $M_0$ and $M_1$ with 400 time steps. The
figure shows the differences
\begin{equation*}
\frac{1}{\abs{\Gamma}} \max_{1\leq i\leq N_t}
\abs{ \int_\Gamma q_{h,p}(\xb,t_i) dS_\xb
  - \int_\Gamma q_{h,19}(\xb,t_i) dS_\xb}.
\end{equation*}
where $q_{h,p}$ is the solution of the fully discrete scheme using
order-$p$ quadrature for the calculation of the matrix coefficients.
One can observe exponential decay when the mesh is fixed
and the quadrature error is increased.

\begin{figure}
\begin{center}
\includegraphics[width=6cm]{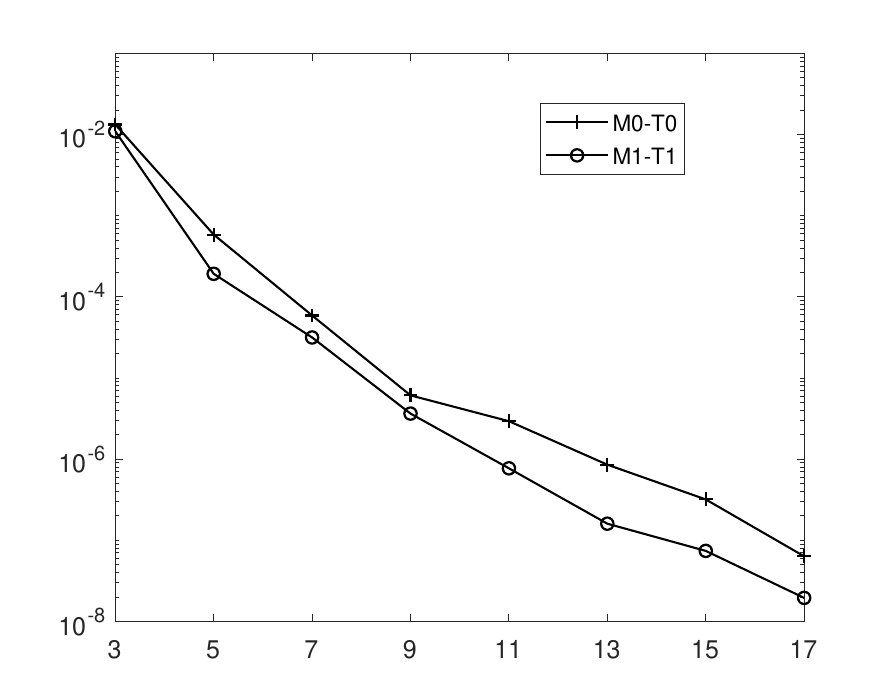}
\end{center}
\caption{Effect of the quadrature error on the numerical solution.} 
\label{fig:quadSoln}
\end{figure}

\bibliography{mrabbrev,big}

\begin{thebibliography}{10}

\bibitem{aimi-etal13}
A.~Aimi, M.~Diligenti, A.~Frangi, and C.~Guardasoni.
\newblock Neumann exterior wave propagation problems: computational aspects of
  {3D} energetic {G}alerkin {BEM}.
\newblock {\em Comput. Mech.}, 51:475–493, 2013.

\bibitem{aimi-etal09}
A.~Aimi, M.~Diligenti, C.~Guardasoni, I.~Mazzieri, and S.~Panizzi.
\newblock An energy approach to space–time {G}alerkin {BEM} for wave
  propagation problems.
\newblock {\em Int. J. Numer. Methods. Eng.}, 80(1196–1240), 2009.

\bibitem{aimi-etal10}
A.~Aimi, M.~Diligenti, and S.~Panizzi.
\newblock Energetic {G}alerkin {BEM} for wave propagation {N}eumann exterior
  problems.
\newblock {\em CMES}, 58(185-219), 2010.

\bibitem{bamberger-duong86}
A.\ Bamberger and T.~H. Duong.
\newblock Formulation variationnelle espace-temps pour le calcul par potentiel
  retard\'e la diffraction d une onde acoustique.
\newblock {\em Math. Methods Appl. Sci.}, 3(8):405–--435, 1986.

\bibitem{banz-etal16}
L.~Banz, H.~Gimperlein, Z.~Nezhi, and E.~P. Stephan.
\newblock Time domain {BEM} for sound radiation of tires.
\newblock {\em Comput Mech}, 58:45–57, 2016.

\bibitem{costabel04}
M.\ Costabel.
\newblock Time-dependent problems with the boundary integral equation method.
\newblock In E.~Stein, R.\ de~Borst, and T.\ Hughes, editors, {\em Encyclopedia
  of Computational Mathematics}. Wiley, 2004.

\bibitem{davies-duncan04}
P.~J. Davies and D.B. Duncan.
\newblock Stability and convergence of collocation schemes for retarded
  potential integral equations.
\newblock {\em SIAM J. Numer. Anal.}, 42(3):1167–1188, 2004.

\bibitem{davies-duncan14}
P.~J. Davies and D.B. Duncan.
\newblock Convolution spline approximations for time domain boundary integral
  equations.
\newblock {\em J. Integr. Eqns. Appl}, 26(3):369--410, 2014.

\bibitem{gimperlein-etal17}
H.~Gimperlein, M.~Maischak, and E.P. Stephan.
\newblock Adaptive time domain boundary element methods with engineering
  applications.
\newblock {\em J. Integral Equations Appl.}, 29(1), 75--105 2017.

\bibitem{grunbaum03}
B.\ Gr\"unbaum.
\newblock {\em Convex Polytopes}, volume 221 of {\em Graduate Texts in
  Mathematics}.
\newblock Springer, 2003.

\bibitem{joly-rodríguez17}
P.~Joly and J.~Rodríguez.
\newblock Mathematical aspects of variational boundary integral equations for
  time dependent wave propagation.
\newblock {\em J. Integral Equations Appl.}, 29(1):137–187, 2017.

\bibitem{khoromskij-etal11}
B.~Khoromskij, S.~Sauter, and A.~Veit.
\newblock Fast quadrature techniques for retarded potentials based on {TT/QTT}
  tensor approximation.
\newblock {\em Comput. Meth. Appl. Math.}, 11(3):342–362, 2011.

\bibitem{kress-sauter08}
W.~Kress and S.~Sauter.
\newblock Numerical treatment of retarded boundary integral equations by sparse
  panel clustering.
\newblock {\em IMA J. Numer. Anal.}, 28:162--185, 2008.

\bibitem{lubich94}
C.\ Lubich.
\newblock On the multistep time discretization of linear initial–boundary
  value problems and their boundary integral equations.
\newblock {\em Numer. Math.}, 67(3):365–389, 1994.

\bibitem{ostermann10}
E.~Ostermann.
\newblock {\em Numerical Methods for Space-Time Variational Formulations of
  Retarded Potential Boundary Integral Equations}.
\newblock PhD thesis, Wilhelm Leibniz Universit\"at, Hannover, 2010.

\bibitem{polz-schanz19}
D.~P\"olz and M.~Schanz.
\newblock Space-time discretized retarded potential boundary integral
  operators: Quadrature for collocation methods.
\newblock {\em SIAM J. Sci. Comput.}, 41(6):A3860–A3886, 2019.

\bibitem{polz-schanz21}
D.~P\"olz and M.~Schanz.
\newblock On the space-time discretization of variational retarded potential
  boundary integral equations.
\newblock {\em Comput. Math. Appl.}, 99:195--210, 2021.

\bibitem{sauter-schwab11}
S.\ Sauter and C.\ Schwab.
\newblock {\em Boundary Element Methods}.
\newblock Springer, 2011.

\bibitem{sauter-veit13}
S.~Sauter and A.~Veit.
\newblock A {G}alerkin method for retarded boundary integral equations with
  smooth and compactly supported temporal basis functions.
\newblock {\em Numer. Math.}, 123:145–176, 2013.

\bibitem{sayas16}
F.-J.\ Sayas.
\newblock {\em Retarded Potentials and Time Domain Boundary Integral
  Equations}.
\newblock Number~50 in Springer Series in Computational Mathematics. Springer,
  2016.

\bibitem{schanz-antes97}
M.~Schanz and H.~Antes.
\newblock Application of operational quadrature methods in time domain boundary
  element methods.
\newblock {\em Meccanica}, 32(3):179--186, 1997.

\bibitem{schneider-etal26}
S.~Schneider, C.~\"Ozdemir, H.~Gimperlein, K.~Urban, and B.~Graf.
\newblock A stable boundary element method for reliable long-time industrial
  sound emission.
\newblock {\em Comp. Mech.}, 2026.

\bibitem{slobodkins-tausch23}
A.~Slobodkins and J.~Tausch.
\newblock A node elimination algorithm for cubature of high-dimensional
  polytopes.
\newblock {\em Computers {\&} Math. Appl.}, 144:229--236, 2023.

\bibitem{tausch26}
J.~Tausch.
\newblock Quadrature for singular integrals over convex polytopes.
\newblock {\em arXiv preprint}, 2025.
\newblock 2511.13974.

\bibitem{ziegler95}
G.~Ziegler.
\newblock {\em Lectures on Polytopes}, volume 152 of {\em Graduate Texts in
  Mathematics}.
\newblock Springer, 1995.

\end{thebibliography}
\bibliographystyle{plain}

\end{document}